\def\subjclass#1{{\renewcommand{\thefootnote}{}%
\footnote{\emph{Mathematics Subject Classification (2010):} #1}}}
\DeclareMathOperator{\curl}{curl}
\DeclareMathOperator{\divg}{div}
\theoremstyle{plain}
\newtheorem{Thm}{Theorem}
\newtheorem{Rem}[Thm]{Remark}
\newtheorem{Lem}[Thm]{Lemma}
\newcommand {\p}{\partial}
\newcommand{\q}{\quad}
\def\0{\mathbf 0}
\def\O{\Omega}
\def\u{\mathbf u}
\def\w{\mathbf w}
\numberwithin{equation}{section}
\numberwithin{Thm}{section}
\begin{document}
\large

\title[Comparison results for eigenvalues]{Comparison results for eigenvalues of curlcurl operator and Stokes operator}

\author[]{Zhibing Zhang}

\address{Zhibing Zhang: School of Mathematics and Physics, Anhui University of Technology, Ma'anshan 243032, PR China; }
\email{zhibingzhang29@126.com}

\thanks{}

\keywords{Eigenvalue problem, curlcurl operator, Stokes operator, Maxwell constants}

\subjclass{35P15; 35P05}

\begin{abstract}
This paper mainly establishes comparison results for eigenvalues of $\curl\curl$ operator and Stokes operator.
For three-dimensional simply connected bounded domains, the $k$-th eigenvalue of $\curl\curl$ operator under tangent boundary condition or normal boundary condition is strictly smaller than the $k$-th eigenvalue of Stokes operator. For any dimension $n\geq2$, the first eigenvalue of Stokes operator is strictly larger than the first eigenvalue of Dirichlet Laplacian. For three-dimensional strictly convex domains, the first eigenvalue of $\curl\curl$ operator under tangent boundary condition or normal boundary condition is strictly larger than the second eigenvalue of Neumann Laplacian.

\end{abstract}
\maketitle

\section{Introduction}
The $\curl\curl$ eigenvalue problems are motivated from the investigation of the eigenvalue problem of the Maxwell operator or the time-harmonic Maxwell equations, see \cite{CD1999,Filonov2003,HL1996}. The eigenvalues of the $\curl\curl$ operator and the Stokes operator are important because of their many applications in electromagnetic fields and fluid mechanics, respectively. Under zero tangential boundary condition or zero normal boundary condition, $0$ is the smallest and trivial eigenvalue of the $\curl\curl$ operator. We are concerned with the positive eigenvalues of the $\curl\curl$ operator. For $\lambda>0$, if $\u$ is a solution to
$$\curl\curl\u=\lambda\u,$$
then $\u$ must satisfy the compatible condition $\divg\u=0$.
In this paper, we consider the following $\curl\curl$ eigenvalue problems
\begin{equation}\label{eig-a}
\begin {cases}
\curl\curl\u=\alpha\u,\q\divg\mathbf{u}=0 & \text{ \rm in } \Omega,\\
\mathbf{u}\times\nu=\mathbf{0} &\text{ \rm on } \partial\Omega,\\
\u\in\Bbb H_2(\O)^\perp,
\end {cases}
\end{equation}
\begin{equation}\label{eig-b}
\begin {cases}
\curl\curl\u=\beta\u,\q\divg\mathbf{u}=0& \text{ \rm in } \Omega,\\
\mathbf{u}\cdot\nu=0,\q \curl\u\times\nu=\0 &\text{ \rm on } \partial\Omega,\\
\u\in\Bbb H_1(\O)^\perp,
\end {cases}
\end{equation}
and the Stokes eigenvalue problem
\begin{equation}\label{eig-c}
\begin {cases}
-\Delta\u+\nabla p=\gamma\u,\q\divg\mathbf{u}=0 & \text{ \rm in } \Omega,\\
\mathbf{u}=\mathbf{0} &\text{ \rm on } \partial\Omega,\\
\end {cases}
\end{equation}
where $\Omega\subset\mathbb{R}^3$ is a bounded domain with $C^1$ boundary $\partial\Omega$, $\nu$ is the unit outer normal vector field on $\p\O$ and the spaces $\Bbb H_1(\O)$, $\Bbb H_2(\O)$ are defined by
$$
\aligned
&\Bbb H_1(\O)=\{\u\in L^2(\O,\Bbb R^3): \curl\u=\0,\; \divg\u=0\text{ in $\O$,}\; \u\cdot\nu=0 \text{ on $\p\O$}\},\\
&\Bbb H_2(\O)=\{\u\in L^2(\O,\Bbb R^3): \curl\u=\0,\;\divg\u=0\text{ in $\O$,}\; \u\times\nu=\0 \text{ on $\p\O$}\}.
\endaligned
$$
Here we point out that the purpose of imposing $\u\in\Bbb H_2(\O)^\perp$ in \eqref{eig-a} and $\u\in\Bbb H_1(\O)^\perp$ in \eqref{eig-b} is to guarantee that $\alpha$ and $\beta$ are positive eigenvalues.

Throughout this paper, if there is no special declaration, we always make the following assumptions on the domain $\O$:
\begin{enumerate}
\item[$(a)$] $\Omega\subset\mathbb{R}^3$ is a bounded $C^1$ domain, and $\Omega$ is locally situated on one side of $\partial\Omega$; $\partial\Omega$ has $m+1$ connected components $\Gamma_0$, $\Gamma_1$, $\cdots$, $\Gamma_m$, where $\Gamma_0$ denotes the boundary
of the infinite connected component of $\mathbb{R}^3\backslash\overline{\Omega}$.
\item[$(b)$] The domain $\Omega$ which can be multiply connected, is made simply connected by $N$ regular cuts $\Sigma_1$, $\Sigma_2$, $\cdots$, $\Sigma_N$ which are of class $C^2$; the $\Sigma_i$, $i=1,2,\cdots,N$ satisfying $\Sigma_i\cap\Sigma_j=\emptyset$ for $i\neq j$ are non-tangential to $\partial\Omega$.
\end{enumerate}
We say that $\O$ is simply connected if $N=0$, and $\O$ has no holes if $m=0$. It is well-known that $\dim \Bbb H_1(\O)=N$ and $\dim \Bbb H_2(\O)=m$. Let
$$
\aligned
&0<\alpha_1\leq\alpha_2\leq\alpha_3\leq\cdots,\\
&0<\beta_1\leq\beta_2\leq\beta_3\leq\cdots,\\
&0<\gamma_1\leq\gamma_2\leq\gamma_3\leq\cdots\\
\endaligned
$$
denote the successive eigenvalues for \eqref{eig-a}, \eqref{eig-b} and \eqref{eig-c}, respectively. Let
$$0<\lambda_1<\lambda_2\leq \lambda_3\leq\cdots$$
be the eigenvalues of the Dirichlet Laplacian and
$$0=\mu_1<\mu_2\leq \mu_3\leq\cdots$$
be the eigenvalues of the Neumann Laplacian. Here each eigenvalue is repeated according to its multiplicity. We give some notations frequently used in the context as follows:
$$
\aligned
&H^1_0(\divg0,\O)=\{\u\in H_0^1(\O,\Bbb R^3):\divg\u=0\text{ in $\O$}\},\\
&H_{n0}(\divg0,\curl,\O)=\{\u,\;\curl\u\in L^2(\O,\Bbb R^3):\divg\u=0\text{ in $\O$,}\; \u\cdot\nu=0 \text{ on $\p\O$}\},\\
&H_{t0}(\divg0,\curl,\O)=\{\u,\;\curl\u\in L^2(\O,\Bbb R^3):\divg\u=0\text{ in $\O$,}\; \u\times\nu=\0 \text{ on $\p\O$}\},\\
&H_{n0}(\divg,\curl,\O)=\{\u,\;\curl\u\in L^2(\O,\Bbb R^3):\divg\u\in L^2(\O),\; \u\cdot\nu=0 \text{ on $\p\O$}\},\\
&H_{t0}(\divg,\curl,\O)=\{\u,\;\curl\u\in L^2(\O,\Bbb R^3):\divg\u\in L^2(\O),\; \u\times\nu=\0 \text{ on $\p\O$}\}.\\
\endaligned
$$

Before stating the main results, we review some recent researches. For three-dimensional bounded domains, it holds that $\alpha_1=\beta_1$, see \cite{Pauly}. Moreover, if the domain is convex, Pauly \cite{Pauly2015} proved that $\alpha_1=\beta_1\geq \mu_2$. For three-dimensional bounded and star-shaped $C^{1,1}$ domains, Zeng and the author \cite{ZZ2017} obtained that $\alpha_1=\beta_1<\gamma_1$. For two-dimensional bounded domains, Kelliher \cite{Kelliher} showed that $\gamma_k>\lambda_k$ holds for any positive integer $k$. For two-dimensional simply connected bounded domains, since the Stokes eigenvalue problem can be rewritten as the clamped buckling plate problem, one can obtain that $\gamma_1>\lambda_2$, see \cite{Payne1955} or \cite{Fri2004}. By the way, for the lower bound on the eigenvalues of the Stokes operator we refer Berezin-Li-Yau type inequalities, which were studied by \cite{Ilyin2010,YY2012}.

In this paper, we mainly obtain several comparison results for eigenvalues of the $\curl\curl$ operator and the Stokes operator. Firstly, for three-dimensional simply connected bounded domains, we prove that $\alpha_k=\beta_k<\gamma_k$, which is proved via an adaptation of Filonov's elegant proof \cite{Filonov2004} of the inequality $\mu_{k+1}<\lambda_k$. Secondly, using the fact that the first eigenvalue of the Dirichlet Laplacian is simple, we get that the first eigenvalue of the Stokes operator is strictly larger than the first eigenvalue of the Dirichlet Laplacian for any dimension $n\geq2$. Lastly, we obtain that $\alpha_1=\beta_1>\mu_2$ for strictly convex $C^{1,1}$ domains.

Now we state our main results more precisely.
\begin{Thm}\label{M-S}
Let~$\O$~be a simply connected domain. Then it holds that $\alpha_k=\beta_k<\gamma_k$, where $k$ is any positive integer.
\end{Thm}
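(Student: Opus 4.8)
The plan is to adapt Filonov's variational argument for $\mu_{k+1}<\lambda_k$ to the present setting. The equality $\alpha_k=\beta_k$ should follow from known structural facts about the spectra of the $\curl\curl$ operator under the two boundary conditions: on a simply connected domain ($N=0$), so $\Bbb H_1(\O)=\{\0\}$, the operator $\curl$ gives a unitary correspondence between the tangential and normal problems (if $\u$ solves \eqref{eig-a} with eigenvalue $\alpha$, then $\curl\u$, after normalization, solves \eqref{eig-b} with the same eigenvalue, and vice versa), so the positive spectra coincide with multiplicity. I would state this as a lemma, reducing the theorem to proving $\beta_k<\gamma_k$. (Alternatively one uses $\alpha_1=\beta_1$ from \cite{Pauly} together with the min-max characterizations — but the clean unitary equivalence via $\curl$ is the right tool here.)

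For $\beta_k<\gamma_k$ the heart of the matter is a comparison of Rayleigh quotients. The Stokes eigenvalues have the min-max characterization
\[
\gamma_k=\min_{\substack{V\subset H^1_0(\divg0,\O)\\ \dim V=k}}\ \max_{\u\in V\setminus\{\0\}}\ \frac{\int_\O|\d\u|^2}{\int_\O|\u|^2},
\]
while for the normal $\curl\curl$ problem, since on a simply connected domain $\Bbb H_1(\O)=\{\0\}$,
\[
\beta_k=\min_{\substack{W\subset H_{n0}(\divg0,\curl,\O)\\ \dim W=k}}\ \max_{\u\in W\setminus\{\0\}}\ \frac{\int_\O|\curl\u|^2}{\int_\O|\u|^2}.
\]
The first key observation is that $H^1_0(\divg0,\O)\subset H_{n0}(\divg0,\curl,\O)$ and, crucially, for $\u\in H^1_0(\divg0,\O)$ one has $\int_\O|\curl\u|^2=\int_\O|\d\u|^2$ (integration by parts, using $\u=\0$ on $\p\O$ and $\divg\u=0$). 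Thus testing $\beta_k$ with a $k$-dimensional subspace of Stokes trial functions already yields $\beta_k\le\gamma_k$; the work is to make the inequality strict. Following Filonov, I would take a subspace realizing (or nearly realizing) $\gamma_k$ — spanned by the first $k$ Stokes eigenfunctions $\u_1,\dots,\u_k$ — and enlarge it by one extra function $\phi$ lying in $H_{n0}(\divg0,\curl,\O)$ but \emph{not} in $H^1_0$, chosen so that on the resulting $(k+1)$-dimensional space the Rayleigh quotient $\int|\curl\cdot|^2/\int|\cdot|^2$ stays $\le\gamma_k$, with equality impossible. A natural candidate for $\phi$ is a nonzero $\curl$-free, $\divg$-free field with $\phi\cdot\nu=0$ — but that space is $\Bbb H_1(\O)=\{\0\}$ here, so instead one takes $\phi$ solving $\curl\curl\phi=\gamma_k\phi$ weakly with the normal boundary conditions, or more robustly a field of the form $\phi=\d h$ is ruled out; the correct choice (as in Filonov) is a \emph{gradient} of a harmonic function adjusted to be divergence-free — here one should instead use that the inclusion $H^1_0(\divg0)\subset H_{n0}(\divg0,\curl)$ is \emph{strict}, and any $\phi$ in the complement that is not tangentially vanishing supplies the needed extra dimension while keeping $\int|\curl\phi|^2\le\gamma_k\int|\phi|^2$ and destroying the possibility of equality (because equality in the min-max would force $\phi$ to be a Stokes eigenfunction, hence in $H^1_0$, contradiction).

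The main obstacle is precisely the construction and control of this extra trial function $\phi$: one must exhibit $\phi\in H_{n0}(\divg0,\curl,\O)\setminus H^1_0(\O,\Bbb R^3)$ with $\int_\O|\curl\phi|^2\le\gamma_k\int_\O|\phi|^2$ and verify that adjoining it to $\mathrm{span}\{\u_1,\dots,\u_k\}$ does not increase the maximum of the curl-Rayleigh quotient beyond $\gamma_k$ — this requires an orthogonality/decoupling argument (the $\curl$ of $\phi$ should be $L^2$-orthogonal, or nearly so, to $\curl\u_j=\d\u_j$, and $\phi$ orthogonal to the $\u_j$), exactly the delicate point in Filonov's original proof. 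Handling the two boundary conditions uniformly — i.e. getting $\alpha_k<\gamma_k$ directly rather than only through the equality $\alpha_k=\beta_k$ — and confirming that simple connectedness ($N=0$) is exactly what makes $\Bbb H_1(\O)$ trivial (so no spurious zero modes interfere with the counting) are the remaining points that need care but should be routine given assumption $(b)$ and the cited dimension formulas.
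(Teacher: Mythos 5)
Your setup is right --- the min--max characterizations, the inclusion $H^1_0(\divg0,\O)\subset H_{n0}(\divg0,\curl,\O)$, and the identity $\int_\O|\curl\u|^2\,dx=\int_\O|\nabla\u|^2\,dx$ for $\u\in H^1_0(\divg0,\O)$ do give $\beta_k\le\gamma_k$, and the equality $\alpha_k=\beta_k$ is handled essentially as in the paper (equality of the eigenspace dimensions for the two boundary conditions). But the strictness step, which is the whole point, is not there. Your plan is to adjoin a single function $\phi\in H_{n0}(\divg0,\curl,\O)\setminus H^1_0$ and you assert that ``any $\phi$ in the complement'' will satisfy $\int_\O|\curl\phi|^2\,dx\le\gamma_k\int_\O|\phi|^2\,dx$ and keep the Rayleigh quotient of the enlarged space below $\gamma_k$. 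That is false: a generic element of the complement has arbitrarily large Rayleigh quotient, and even for a good $\phi$ the cross terms with the Stokes eigenfunctions need to be controlled, which you do not do. Your fallback (``equality in the min--max would force $\phi$ to be a Stokes eigenfunction, hence in $H^1_0$'') is not a valid deduction from the inequality $\beta_k\le\gamma_k$.

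The paper's resolution, which is the actual content of Filonov's trick here, is to enlarge not by one ad hoc function but by the \emph{entire eigenspace} $\mathrm{ker}(\mathrm{M}_{n0}-\mu)$ at the threshold $\mu=\gamma_k$. Two things then do the work. First, a separate lemma shows $H^1_0(\divg0,\O)\cap\mathrm{ker}(\mathrm{M}_{n0}-\mu)=\{\0\}$, proved by extending a putative intersection element by zero to $\R^3$ and observing that it would be a compactly supported solution of $-\Delta\w=\mu\w$, hence zero; this makes the sum $F+\mathrm{ker}(\mathrm{M}_{n0}-\mu)$ direct. Second, for $\u\in F$ and $\mathbf{v}\in\mathrm{ker}(\mathrm{M}_{n0}-\mu)$ the cross term computes exactly, $\int_\O\curl\u\cdot\curl\mathbf{v}\,dx=\mu\int_\O\u\cdot\mathbf{v}\,dx$, so the Rayleigh bound survives on the whole sum. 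This yields the counting inequality $N_{\mathrm{M}_{n0}}(\mu)\ge N_{\mathrm{S}}(\mu)+\dim\mathrm{ker}(\mathrm{M}_{n0}-\mu)$, and subtracting the kernel dimension shows at least $k$ eigenvalues of $\mathrm{M}_{n0}$ lie \emph{strictly} below $\gamma_k$, i.e.\ $\beta_k<\gamma_k$. You would need to supply both the directness lemma and this counting-function bookkeeping; without them the argument does not close.
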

\begin{Thm}\label{Thm2}
Let~$\O$ be a bounded $C^1$ domain in $\mathbb{R}^n$, where $n\geq2$. Then it holds that $\gamma_1>\lambda_1$.
\end{Thm}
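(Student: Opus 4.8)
The plan is to compare the two Rayleigh quotients. Recall that
$$\gamma_1=\min\left\{\frac{\int_\O|\nabla\u|^2}{\int_\O|\u|^2}:\u\in H^1_0(\divg0,\O),\ \u\neq\0\right\},\qquad \lambda_1=\min\left\{\frac{\int_\O|\nabla v|^2}{\int_\O v^2}:v\in H^1_0(\O),\ v\neq0\right\}.$$
Since every component $u_i$ of an admissible $\u$ for the Stokes problem lies in $H^1_0(\O)$, testing the Dirichlet Rayleigh quotient componentwise gives $\int_\O|\nabla\u|^2\geq\lambda_1\int_\O|\u|^2$, hence $\gamma_1\geq\lambda_1$ immediately. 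The whole content of the theorem is therefore the \emph{strictness}, and this is where the simplicity of $\lambda_1$ enters: I would argue by contradiction.

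Suppose $\gamma_1=\lambda_1$. Let $\u=(u_1,\dots,u_n)$ be a Stokes eigenfunction for $\gamma_1$. Then equality must hold in $\int_\O|\nabla u_i|^2\geq\lambda_1\int_\O u_i^2$ for every $i$, which forces each nonzero component $u_i$ to be a Dirichlet eigenfunction for $\lambda_1$. Since $\lambda_1$ is simple with a one-dimensional eigenspace spanned by a function $\varphi_1$ that does not change sign (say $\varphi_1>0$ in $\O$), each component satisfies $u_i=c_i\varphi_1$ for some constant $c_i\in\R$. Thus $\u=\mathbf c\,\varphi_1$ for a fixed nonzero vector $\mathbf c\in\R^n$.

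Now I would derive the contradiction from the divergence-free condition: $0=\divg\u=\mathbf c\cdot\nabla\varphi_1$ a.e. in $\O$. This says the directional derivative of $\varphi_1$ in the fixed direction $\mathbf c$ vanishes identically, so $\varphi_1$ is constant along every line parallel to $\mathbf c$ inside $\O$. But $\varphi_1\in H^1_0(\O)$ vanishes on $\pO$; taking any such line segment in $\O$ whose closure meets $\pO$ (which exists since $\O$ is bounded and open), constancy along the line together with the zero boundary value forces $\varphi_1\equiv0$ on that segment, and by a connectedness/unique-continuation argument $\varphi_1\equiv0$ in $\O$, contradicting $\varphi_1>0$. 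The one delicate point — the main (minor) obstacle — is making the last step rigorous for a general $C^1$ domain: one clean way is to note $\partial_{\mathbf c}\varphi_1=0$ means $\varphi_1$ factors through the projection $\O\to\mathbf c^\perp$, write $\varphi_1(x)=\psi(\Pi x)$ where $\Pi$ is the orthogonal projection onto $\mathbf c^\perp$; since $\O$ is bounded, for a.e. value of $\Pi x$ the fiber is a nonempty bounded open set in $\R$ on whose endpoints $\varphi_1=0$, so $\psi=0$ there, whence $\varphi_1=0$. This contradiction establishes $\gamma_1>\lambda_1$. (Alternatively, one can invoke unique continuation for $-\Delta\varphi_1=\lambda_1\varphi_1$: a solution vanishing on an open subset of $\O$ vanishes identically.)
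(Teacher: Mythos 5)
Your proof is correct and takes essentially the same approach as the paper: reduce to showing $\gamma_1\neq\lambda_1$, use the componentwise Dirichlet Rayleigh quotient together with the simplicity of $\lambda_1$ to force $\u=\mathbf{c}\,\varphi_1$, and derive a contradiction from $\divg\u=\mathbf{c}\cdot\nabla\varphi_1=0$. In fact you supply more detail than the paper on the final step (the paper simply asserts that this identity implies $\varphi_1=0$), and your fiber/projection justification of that implication is a clean way to close the gap.
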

\begin{Thm}\label{Thm3}
Let~$\O$~be a strictly convex $C^{1,1}$ domain. Then it holds that $\alpha_1=\beta_1>\mu_2$.
\end{Thm}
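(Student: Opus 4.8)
The plan is to exploit the known identity $\alpha_1=\beta_1$ (from \cite{Pauly}) together with the characterization of $\alpha_1$ as a min over divergence-free tangential fields, and then to use the variational characterization of $\mu_2$ (the first \emph{nonzero} Neumann eigenvalue) via gradients. The strategy is: (i) take a minimizer $\u$ for $\alpha_1$, so that $\curl\curl\u=\alpha_1\u$, $\divg\u=0$ in $\O$, $\u\times\nu=\0$ on $\pO$, and $\u\in\Bbb H_2(\O)^\perp$; (ii) observe that since $\O$ is simply connected ($m=0$, $N=0$ for a strictly convex domain, so $\Bbb H_1=\Bbb H_2=\{\0\}$) and $\divg\u=0$, we may write $\u=\curl\w$ for a suitable vector potential $\w$, or alternatively build a scalar test function out of $\u$; (iii) produce from $\u$ a scalar function $\phi$ with zero mean over $\O$ to plug into the Rayleigh quotient $\mu_2=\min\{\int_\O|\nabla\phi|^2/\int_\O\phi^2:\ \int_\O\phi=0\}$; (iv) show $\int_\O|\nabla\phi|^2/\int_\O\phi^2<\alpha_1$, and argue the inequality is strict using strict convexity (a boundary term must vanish, and strict convexity of $\pO$ forces it to have a sign or forces $\u$ to be trivial).

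Concretely, the natural candidate for $\phi$ comes from the observation that for a divergence-free $\u$ satisfying $\curl\curl\u=\alpha_1\u$, the field $\curl\u$ is again divergence-free and, on a strictly convex (hence simply connected, $C^{1,1}$) domain, one can integrate by parts relating $\int_\O|\curl\u|^2=\alpha_1\int_\O|\u|^2$ to boundary curvature terms. I would first establish the integration-by-parts identity
\begin{equation*}
\int_\O|\curl\curl\u|^2\,dx=\int_\O|\nabla\curl\u|^2\,dx+\int_{\pO}\mathrm{(curvature\ terms)}\,dS,
\end{equation*}
valid because $\curl\u\times\nu$ is controlled on $\pO$ (indeed, from $\u\times\nu=\0$ one gets the tangential part of $\u$ vanishes, and $\divg\u=0$ plus $C^{1,1}$ regularity gives enough to run the Gaffney-type identity). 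On a strictly convex domain the second fundamental form is positive definite, so the boundary integral has a definite sign; this is the mechanism that upgrades an a priori weak inequality to a strict one, exactly in the spirit of Pauly's convex-domain argument \cite{Pauly2015}.

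The chain of inequalities I expect is then: $\alpha_1=\beta_1$; then using the vector potential $\w$ with $\curl\w=\u$, $\divg\w=0$, $\w\cdot\nu=0$ (available since $\O$ is simply connected and has no holes, so the Helmholtz decomposition has no harmonic part), one has $\int_\O|\u|^2=\int_\O|\curl\w|^2$ and $\alpha_1=\int_\O|\curl\u|^2/\int_\O|\u|^2$ is \emph{also} an admissible Rayleigh quotient for a $\curl\curl$ problem for $\w$; iterating or combining with the Friedrichs/Poincaré-type inequality on the convex domain gives $\alpha_1>\mu_2$ because the extra boundary curvature term is strictly positive. Alternatively — and this may be the cleaner route — I would take the scalar potential: since $\curl\u$ is divergence-free and tangential data is known, there is $\psi$ with $\curl\u=\nabla\times(\cdots)$; but more usefully, examining the identity $\curl\curl\u=-\Delta\u$ (valid because $\divg\u=0$) shows $-\Delta\u=\alpha_1\u$ componentwise, and then each component, suitably corrected to have zero mean, is a competitor for $\mu_2$ \emph{provided} its normal derivative on $\pO$ behaves well; strict convexity is what makes the Neumann defect strictly negative.

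The main obstacle will be step (iv): producing a genuinely \emph{admissible} zero-mean scalar competitor for $\mu_2$ out of the vector eigenfield and then proving the inequality is \emph{strict} rather than just $\geq$. The strictness is where strict convexity (not just convexity) must be used in an essential way — presumably through a boundary curvature integral $\int_{\pO}\mathrm{II}(\u_{\mathrm{tan}},\u_{\mathrm{tan}})\,dS$ or $\int_{\pO}\mathrm{II}(\nu,\nu)|\u|^2\,dS$ that is strictly positive unless $\u\equiv\0$ on $\pO$, combined with a unique-continuation argument showing a $\curl\curl$ eigenfield cannot vanish on an open subset of $\pO$. I would handle the equality-case analysis by: if the boundary term were zero, then $\u$ (or its relevant trace) vanishes on all of $\pO$; then $\u$ solves $-\Delta\u=\alpha_1\u$ with both $\u=\0$ and, from $\u\times\nu=\0$ already known plus the vanishing, enough Cauchy data to force $\u\equiv\0$ by unique continuation — contradiction. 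I also expect to need the $C^{1,1}$ hypothesis precisely to justify the $H^2$-regularity of $\u$ needed to make the boundary integration by parts rigorous, citing the standard regularity theory for $\curl$-$\divg$ systems on convex/$C^{1,1}$ domains.
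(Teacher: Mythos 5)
Your overall strategy (a Gaffney--Grisvard identity with a boundary curvature term, positivity of the second fundamental form on a strictly convex domain, and an equality-case analysis) is indeed the skeleton of the paper's argument, but three of your concrete steps have genuine gaps. First, the step you yourself flag as the main obstacle --- producing an admissible zero-mean competitor for $\mu_2$ --- is never carried out, and your search for a single scalar function $\phi$ is a detour. The paper's resolution is much more direct: work with the \emph{normal} boundary condition eigenfield $\u\in H_{n0}(\divg0,\curl,\O)$ realizing $\beta_1$ (recall $\alpha_1=\beta_1$), and observe that for every constant vector $\mathbf a$,
$$\int_\O\u\cdot\mathbf a\,dx=\int_{\p\O}(\u\cdot\nu)(\mathbf a\cdot x)\,dS-\int_\O(\divg\u)(\mathbf a\cdot x)\,dx=0,$$
so each Cartesian component of $\u$ has zero mean and is itself a competitor for $\mu_2$; summing gives $\int_\O|\nabla\u|^2\ge\mu_2\int_\O|\u|^2$ with no scalar potential needed. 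Note that this computation \emph{fails} for the tangential eigenfield you chose to work with ($\u\times\nu=\0$), because then the boundary term $\int_{\p\O}(\u\cdot\nu)(\mathbf a\cdot x)\,dS$ does not vanish; likewise the Grisvard identity $\int_\O|\nabla\u|^2=\int_\O(|\divg\u|^2+|\curl\u|^2)\,dx-\int_{\p\O}\mathscr B(\u_T,\u_T)\,dS$ used here requires $\u\cdot\nu=0$. So the choice of boundary condition is not cosmetic.

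Second, your equality-case analysis is incorrect as stated. If the curvature integral vanishes, strict convexity gives $\u_T=\0$, hence $\u=\0$ on $\p\O$; but you then claim that $\u=\0$ on $\p\O$ together with $\u\times\nu=\0$ provides ``enough Cauchy data'' for unique continuation of $-\Delta\u=\alpha_1\u$. It does not: $\u\times\nu=\0$ is merely the tangential part of the Dirichlet trace, and Dirichlet data alone never forces an eigenfunction to vanish (every Dirichlet Laplacian eigenfunction is a counterexample). The missing normal-derivative information is supplied in the paper by a Filonov-type lemma (Lemma \ref{lem-sum}): extend $\u$ by zero to $\mathbb R^3$; because the natural boundary condition $\curl\u\times\nu=\0$ is built into the weak formulation of $\mathrm M_{n0}$, the extension satisfies $\curl\curl\w=\mu\w$ weakly across $\p\O$, hence $-\Delta\w=\mu\w$ in all of $\mathbb R^3$ with compact support, forcing $\w=\0$. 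Without this extension argument (or an equivalent substitute), your contradiction in the equality case does not close. The proposed second-order identity for $\int_\O|\curl\curl\u|^2$ and the vector-potential detour are not needed and would require $H^2$ regularity you have not established.
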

\begin{Rem}\label{Rem}
Using Theorem \ref{Thm3}, we can give a partial answer to a conjecture proposed by Pauly. Assume that $\O$ is convex. The Maxwell constants $c_{m,t}$ and $c_{m,n}$ are the best constants for the Maxwell inequalities
$$\|\u\|_{L^2(\O)}\leq c_{m,t}(\|\divg\u\|_{L^2(\O)}^2+\|\curl\u\|_{L^2(\O)}^2)^\frac{1}{2},\text{ for any $\u\in H^1_{t0}(\divg,\curl,\O)$,}$$
$$\|\u\|_{L^2(\O)}\leq c_{m,n}(\|\divg\u\|_{L^2(\O)}^2+\|\curl\u\|_{L^2(\O)}^2)^\frac{1}{2},\text{ for any $\u\in H^1_{n0}(\divg,\curl,\O)$,}$$
respectively. Pauly \cite[Theorem 5]{Pauly2015} proved that
$$\sqrt{\frac{1}{\lambda_1}}\leq c_{m,t}\leq c_{m,n}=\sqrt{\frac{1}{\mu_2}}.$$
Pauly \cite[Remark 11]{Pauly20151} conjectured that
$$\sqrt{\frac{1}{\lambda_1}}<c_{m,t}<c_{m,n}=\sqrt{\frac{1}{\mu_2}}.$$
For strictly convex $C^{1,1}$ domains, we show that $c_{m,t}<c_{m,n}$. For details, see Section 2.
\end{Rem}

\section{Proof of the main results}
For convenience, denote by $\mathrm{M}_{t0}$ the $\curl\curl$ operator on $H_{t0}(\divg0,\curl,\O)\cap \Bbb H_2(\O)^\perp$ and by $\mathrm{M}_{n0}$ the $\curl\curl$ operator on $H_{n0}(\divg0,\curl,\O)\cap \Bbb H_1(\O)^\perp$ .
\begin{Lem}\label{lem-sum}
For all $\mu$ we have
$$H^1_0(\divg0,\O)\cap\mathrm{ker}(\mathrm{M}_{n0}-\mu)=\{\0\}.$$
\end{Lem}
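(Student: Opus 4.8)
The plan is to argue by contradiction. Suppose there exists $\u \in H^1_0(\divg0,\O) \cap \ker(\mathrm{M}_{n0} - \mu)$ with $\u \neq \0$. Since $\u \in \ker(\mathrm{M}_{n0}-\mu)$, we have $\curl\curl\u = \mu\u$ with $\divg\u = 0$ in $\O$, together with the boundary conditions $\u \cdot \nu = 0$ and $\curl\u \times \nu = \0$ on $\p\O$ (and $\u \in \Bbb H_1(\O)^\perp$). But simultaneously $\u \in H^1_0(\divg0,\O)$ forces $\u = \0$ \emph{on all of} $\p\O$, in particular $\u \times \nu = \0$ on $\p\O$ as well. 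So $\u$ satisfies the homogeneous Cauchy-type data $\u \times \nu = \0$ and $\u \cdot \nu = 0$ on $\p\O$ for the elliptic system $\curl\curl\u = \mu\u$, $\divg\u = 0$.

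The key step is to run an integration-by-parts/Rellich-type identity. First I would test the equation $\curl\curl\u = \mu\u$ against $\u$ and integrate over $\O$: the identity
$$\int_\O |\curl\u|^2 \,dx = \mu \int_\O |\u|^2\,dx + \int_{\p\O} (\curl\u \times \nu)\cdot \u \, dS$$
holds, and since $\curl\u\times\nu = \0$ on $\p\O$ the boundary term vanishes, giving $\|\curl\u\|_{L^2}^2 = \mu\|\u\|_{L^2}^2$. This alone does not yield a contradiction (it is just the Rayleigh quotient), so the real mechanism must be a unique-continuation argument: the pair $(\u, \curl\u)$ has vanishing full boundary trace, and one wants to conclude $\u \equiv \0$. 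The cleanest route is to observe that $\w := \curl\u$ satisfies $\curl\w = \mu\u$, $\divg\w = 0$, and $\w \times \nu = \0$ on $\p\O$; combined with $\u\cdot\nu = 0$, $\u\times\nu=\0$ on $\p\O$, one can extend $\u$ by zero across $\p\O$ and check that the extension is still a (weak) solution of $\curl\curl(\cdot) = \mu(\cdot)$ on a larger domain — the matching of both tangential and normal traces of $\u$, and the vanishing tangential trace of $\curl\u$, is exactly what makes the distributional $\curl$ and $\divg$ of the zero-extension have no surface-layer contributions. Then Holmgren's uniqueness theorem (or unique continuation for the elliptic operator $\curl\curl - \mu$ acting on divergence-free fields, equivalently $-\Delta\u = \mu\u$ since $\curl\curl\u = -\Delta\u + \grad\divg\u = -\Delta\u$) applied to a field vanishing on an open subset of $\R^3$ forces $\u \equiv \0$ in $\O$, contradicting $\u\neq\0$.

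The main obstacle is making the zero-extension argument rigorous at the level of regularity afforded by a merely $C^1$ domain: one must verify that $\u \in H^1_0$ together with $\curl\u \times \nu = 0$ genuinely kills all boundary layer terms in $\divg$ and $\curl$ of the extension, so that the extended field lies in $H(\curl)\cap H(\divg)$ across $\p\O$ with the PDE intact. An alternative that sidesteps global extension is to invoke interior elliptic regularity to get $\u$ smooth, note $-\Delta\u = \mu\u$ componentwise, and apply the strong unique continuation property for the Laplacian: $\u$ and $\curl\u$ both vanishing on $\p\O$ gives enough vanishing Cauchy data (after localizing near a boundary point and using $\u\cdot\nu = 0$, $\u\times\nu = \0$ to recover $\u = 0$ and enough normal derivatives) to conclude $\u \equiv \0$ by Aronszajn's theorem. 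Either way, the takeaway is that the simultaneous membership in the Dirichlet space $H^1_0(\divg0,\O)$ and in the Neumann-type curlcurl kernel over-determines $\u$, and unique continuation closes the argument.
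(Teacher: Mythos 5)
Your proposal is correct and follows essentially the same route as the paper: extend $\u$ by zero to $\mathbb{R}^3$, verify that the extension $\w$ still satisfies $\curl\curl\w=\mu\w$ distributionally, use $\curl\curl\w=-\Delta\w$ for divergence-free fields, and conclude $\w=\0$ by unique continuation since $\w$ vanishes on an open set. The rigor issue you flag about the zero extension is handled in the paper by a weak-formulation device --- testing against $\mathbf{v}-\nabla\varphi\in H_{n0}(\divg0,\curl,\O)$, where $\varphi$ solves an auxiliary Neumann problem --- which is precisely the precise form of your ``no surface-layer contributions'' observation.
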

\begin{proof}
Let $\mathbf{u}\in H^1_0(\divg0,\O)\cap\mathrm{ker}(\mathrm{M}_{n0}-\mu)$. Set
$$\w(x)=
\begin{cases}
\mathbf{u}(x) &\text{if $x\in \O$},\\
\0 &\text{if $x\notin \O$}.
\end{cases}
$$
Then $\mathbf{w}\in H^1_0(\divg0,\mathbb{R}^3)$. For any $\mathbf{v}\in C^\infty_c(\mathbb{R}^3,\mathbb{R}^3)$, let $\varphi\in H^1(\O)\cap \mathbb{R}^\perp$ solve the following equation
\begin{equation*}
\begin {cases}
\Delta\varphi=\divg\mathbf{v} & \text{ \rm in }\O,\\
\frac{\p\varphi}{\p \nu}=\mathbf{v}\cdot\nu &\text{ \rm on }  \p\O.\\
\end {cases}
\end{equation*}
Then $\mathbf{v}-\nabla\varphi\in H_{n0}(\divg0,\curl,\O)$. Thus we have
$$
\aligned
\int_{\mathbb{R}^3}\curl\w\cdot\curl\mathbf{v}\;dx&=\int_\O\curl\u\cdot\curl\mathbf{v}\;dx=\int_\O\curl\u\cdot\curl(\mathbf{v}-\nabla\varphi)\;dx\\
&=\mu\int_\O\u\cdot(\mathbf{v}-\nabla\varphi)\;dx=\mu\int_\O\u\cdot\mathbf{v}\;dx\\
&=\mu\int_{\mathbb{R}^3}\w\cdot\mathbf{v}\;dx,
\endaligned
$$
which implies $\curl\curl\w=\mu\w$. Using the identity
$$\curl\curl\w=-\Delta\w+\nabla(\divg\w)=-\Delta\w,$$
we obtain $-\Delta\w=\mu\w$. Consequently, $\w=\0$.
\end{proof}

\begin{proof}[Proof of Theorem \ref{M-S}]
For any $\mu>0$, it is not difficult to verify that
$$\dim \mathrm{Ker}(\mathrm{M}_{t0}-\mu )=\dim \mathrm{Ker}(\mathrm{M}_{n0}-\mu ).$$
By some functional analysis (see \cite[p. 438]{Pauly}), we know
$$\sigma(\mathrm{M}_{t0})=\sigma(\mathrm{M}_{n0}).$$
Hence $\alpha_k=\beta_k$, which can also be viewed as an application of \cite[Corollary 32]{Pauly-p}. So we only need to prove $\beta_k<\gamma_k$. Since $\O$ is simply connected, $\mathbb{H}_1(\O)=\{\0\}$. Hence we get
$$H_{n0}(\divg0,\curl,\O)\cap\mathbb{H}_1(\O)^\perp=H_{n0}(\divg0,\curl,\O).$$
We denote the counting functions of the Stokes operator $\mathrm{S}$ and $\mathrm{M}_{n0}$ by $N_\mathrm{S}$ and $N_{\mathrm{M}_{n0}}$:
$$N_\mathrm{S}(\mu)=\mathrm{card}(\sigma(S)\cap[0,\mu]),\;N_{\mathrm{M}_{n0}}(\mu)=\mathrm{card}(\sigma(\mathrm{M}_{n0})\cap[0,\mu]).$$
It is not difficult to verify that
$$
\aligned
&N_\mathrm{S}(\mu)=\max\{\dim L:\;L\subseteq H^1_0(\divg0,\O),\;\int_\O|\nabla\u|^2\;dx\leq \mu\int_\O|\u|^2\;dx,\;\u\in L\},\\
&N_{\mathrm{M}_{n0}}(\mu)=\max\{\dim L:\;L\subseteq H_{n0}(\divg0,\curl,\O),\;\int_\O|\curl\u|^2\;dx\leq \mu\int_\O|\u|^2\;dx,\;\u\in L\}.
\endaligned
$$
Let $\mu>0$. We choose a subspace $F$ of $H^1_0(\divg0,\O)$ such that $\dim F=N_\mathrm{S}(\mu)$ and
$$\int_\O|\nabla\u|^2\;dx\leq \mu\int_\O|\u|^2\;dx,\;\u\in F.$$
From Lemma \ref{lem-sum}, we know that the sum $F+\mathrm{ker}(\mathrm{M}_{n0}-\mu)$ is direct.

Let $\u\in F$ and $\mathbf{v}\in\mathrm{ker}(\mathrm{M}_{n0}-\mu)$. Then it holds that
$$
\aligned
\int_\O|\curl(\u+\mathbf{v})|^2\;dx&= \int_\O(|\curl\u|^2+2\curl\u\cdot\curl\mathbf{v}+|\curl\mathbf{v}|^2)\;dx\\
&=\int_\O|\nabla\u|^2\;dx+2\mu\int_\O\u\cdot\mathbf{v}\;dx+\int_\O|\curl\mathbf{v}|^2\;dx\\
&\leq \mu\int_\O|\u|^2\;dx+2\mu\int_\O\u\cdot\mathbf{v}\;dx+\mu\int_\O|\mathbf{v}|^2\;dx\\
&=\mu\int_\O|\u+\mathbf{v}|^2\;dx.
\endaligned
$$
Thus, we get
$$N_{\mathrm{M}_{n0}}(\mu)\geq \dim (F+\mathrm{ker}(\mathrm{M}_{n0}-\mu))=N_\mathrm{S}(\mu)+\dim \mathrm{ker}(\mathrm{M}_{n0}-\mu).$$
Set $\mu=\gamma_k$, then we have
$$\mathrm{card}(\sigma(\mathrm{M}_{n0})\cap[0,\mu))=N_{\mathrm{M}_{n0}}(\mu)-\dim \mathrm{ker}(\mathrm{M}_{n0}-\mu)\geq N_\mathrm{S}(\mu)\geq k,$$
which implies $\beta_k<\gamma_k$.
\end{proof}
\begin{Rem}
If $\O$ is not simply connected, we can not verify that
$$H^1_0(\divg0,\O)\subseteq H_{n0}(\divg0,\curl,\O)\cap\mathbb{H}_1(\O)^\perp.$$
So the idea in the proof of Theorem \ref{M-S} may not work.
\end{Rem}

\begin{proof}[Proof of Theorem \ref{Thm2}]
It is trivial that $\gamma_1\geq\lambda_1$. In order to prove $\gamma_1>\lambda_1$, we only need to show $\gamma_1\neq\lambda_1$. We assume that $\gamma_1=\lambda_1$. Since $\gamma_1$ can be attained, there exists $\0\neq \u\in H^1_0(\O,\mathbb{R}^n)$ with $\divg\u=0$ in $\O$ such that
\begin{equation}\label{eq-lam1}
\int_\O|\nabla\u|^2\;dx=\gamma_1\int_\O|\u|^2\;dx=\lambda_1\int_\O|\u|^2\;dx.
\end{equation}
On the other hand, for any $1\leq k\leq n$, in view of $u_k\in H^1_0(\O)$, it holds that
\begin{equation}\label{eq-lam2}
\int_\O|\nabla u_k|^2\;dx\geq\lambda_1\int_\O|u_k|^2\;dx.
\end{equation}
Consequently, \eqref{eq-lam1} and \eqref{eq-lam2} force that
$$\int_\O|\nabla u_k|^2\;dx=\lambda_1\int_\O|u_k|^2\;dx,\; 1\leq k\leq n.$$
Since the first eigenvalue $\lambda_1$ of the Dirichlet Laplacian is simple, we obtain $u_k=c_k\varphi_1$, where $c_k$ is a constant and $\varphi_1$ is one of the first eigenfunctions of the Dirichlet Laplacian. Using the divergence-free condition, we have
\begin{equation}\label{div0}
\divg\u=\sum_{k=1}^nc_k\frac{\p\varphi_1}{\p x_k}=0.
\end{equation}
This implies
$\varphi_1=0$, which is a contradiction.

\end{proof}

In order to prove Theorem \ref{Thm3}, we need the following lemma, which can be found in \cite[Theorem 3.1.1.1]{Gri1985} or \cite[Lemma 2.11]{ABDG1998}.

\begin{Lem}\label{lem-id}
Let~$\O$~be a $C^{1,1}$ domain. If $\u\in H^1(\O,\mathbb{R}^3)$ with $\u\cdot\nu=0$ on $\p\O$, then we have
\begin{equation*}
\aligned
\int_\O|\nabla\u|^2\;dx=&\int_\O(|\divg\u|^2+|\curl\u|^2)\;dx-\int_{\p\O}\mathscr{B}(\u_T,\u_T)\;dS,
\endaligned
\end{equation*}
where $\mathscr{B}$ is the curvature tensor of the boundary and $\u_T=\u-(\u\cdot\nu)\nu$.
\end{Lem}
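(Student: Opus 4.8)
The plan is to establish the identity first for vector fields smooth up to $\p\O$ and tangent along $\p\O$, by means of a pointwise Rellich--Pohozaev type divergence identity, and then to pass to a general $\u\in H^1(\O,\mathbb{R}^3)$ with $\u\cdot\nu=0$ on $\p\O$ by density, using the $C^{1,1}$ regularity of $\O$.

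First I would record an algebraic identity valid for any $\u\in C^\infty(\ol\O,\mathbb{R}^3)$: expanding $\curl\u$ with the Levi-Civita symbol gives $|\curl\u|^2=|\nabla\u|^2-\sum_{i,j}\p_i u_j\,\p_j u_i$, and together with $|\divg\u|^2=\sum_{i,j}\p_i u_i\,\p_j u_j$ this yields
$$|\divg\u|^2+|\curl\u|^2-|\nabla\u|^2=\sum_{i,j}\big(\p_i u_i\,\p_j u_j-\p_i u_j\,\p_j u_i\big)=\divg\big((\divg\u)\,\u-(\u\cdot\nabla)\u\big),$$
where the last equality is a direct computation. Integrating over $\O$ and applying the divergence theorem,
$$\int_\O\big(|\divg\u|^2+|\curl\u|^2-|\nabla\u|^2\big)\,dx=\int_{\p\O}\Big[(\divg\u)(\u\cdot\nu)-\big((\u\cdot\nabla)\u\big)\cdot\nu\Big]\,dS.$$
When $\u\cdot\nu=0$ on $\p\O$ the first boundary integrand vanishes; for the second, extending $\nu$ to a Lipschitz field near $\p\O$ and differentiating the relation $\u\cdot\nu\equiv0$ along the tangential direction $\u$ gives $\big((\u\cdot\nabla)\u\big)\cdot\nu=-\u\cdot\big((\u\cdot\nabla)\nu\big)=-\mathscr B(\u_T,\u_T)$ on $\p\O$, with $\mathscr B(\mathbf{X},\mathbf{Y})=\mathbf{X}\cdot\big((\mathbf{Y}\cdot\nabla)\nu\big)$ (on tangent vectors this depends only on the tangential derivatives of $\nu$, hence is the second fundamental form and is independent of the extension). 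Substituting and rearranging gives the asserted identity for smooth tangent $\u$; in particular $\mathscr B\ge0$ when $\O$ is convex, which is the form of the identity needed in the proof of Theorem \ref{Thm3}.

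Next I would remove the smoothness assumption. Given $\u\in H^1(\O,\mathbb{R}^3)$ with $\u\cdot\nu=0$ on $\p\O$, choose $\u_n\in C^\infty(\ol\O,\mathbb{R}^3)$ with $\u_n\to\u$ in $H^1$ and set $\tilde\u_n=\u_n-\chi\,(\u_n\cdot\tilde\nu)\,\tilde\nu$, where $\tilde\nu$ is a Lipschitz extension of $\nu$ and $\chi$ a cut-off equal to $1$ near $\p\O$; then $\tilde\u_n\cdot\nu=0$ on $\p\O$. Since the trace operator is bounded, multiplication by the Lipschitz field $\nu$ is bounded on $H^{1/2}(\p\O)$, and there is a bounded extension $H^{1/2}(\p\O)\to H^1(\O)$, we get $\|\chi\,(\u_n\cdot\tilde\nu)\,\tilde\nu\|_{H^1(\O)}\le C\|\u_n\cdot\nu\|_{H^{1/2}(\p\O)}\to C\|\u\cdot\nu\|_{H^{1/2}(\p\O)}=0$, so $\tilde\u_n\to\u$ in $H^1$. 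Applying the smooth case to $\tilde\u_n$ and letting $n\to\infty$, the three volume integrals converge by $H^1$-convergence, while $(\tilde\u_n)_T\to\u_T$ in $L^2(\p\O)$ and $\mathscr B\in L^\infty(\p\O)$ (here the $C^{1,1}$ regularity of $\O$ enters, guaranteeing $\nu\in W^{1,\infty}$) give $\int_{\p\O}\mathscr B\big((\tilde\u_n)_T,(\tilde\u_n)_T\big)\,dS\to\int_{\p\O}\mathscr B(\u_T,\u_T)\,dS$. This establishes the identity in full generality.

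I expect the delicate points to be the density of smooth tangent fields in $\{\u\in H^1(\O,\mathbb{R}^3):\u\cdot\nu=0\text{ on }\p\O\}$ — which works exactly because, for a $C^{1,1}$ domain, $\nu$ is Lipschitz so the subtraction $\chi(\u_n\cdot\tilde\nu)\tilde\nu$ is controlled in $H^1$ and the curvature tensor is bounded — and the correct sign bookkeeping in identifying $-\big((\u\cdot\nabla)\u\big)\cdot\nu$ with $\mathscr B(\u_T,\u_T)$ (so that $\mathscr B\ge0$ precisely for convex $\O$). Since the identity is classical, an alternative is simply to cite \cite[Theorem 3.1.1.1]{Gri1985} or \cite[Lemma 2.11]{ABDG1998}.
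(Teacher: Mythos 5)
The paper does not prove this lemma at all: it simply cites \cite[Theorem 3.1.1.1]{Gri1985} and \cite[Lemma 2.11]{ABDG1998}, so there is no in-paper argument to compare against. What you have written is, in substance, the standard proof that those references give, and it is correct in outline: the algebraic identity $|\divg\u|^2+|\curl\u|^2-|\nabla\u|^2=\divg\bigl((\divg\u)\u-(\u\cdot\nabla)\u\bigr)$ is right (the second-order terms cancel), the boundary term $(\divg\u)(\u\cdot\nu)$ drops for tangent fields, and the identification $-\bigl((\u\cdot\nabla)\u\bigr)\cdot\nu=\u\cdot\bigl((\u\cdot\nabla)\nu\bigr)=\mathscr{B}(\u_T,\u_T)$ obtained by differentiating $\u\cdot\nu=0$ along $\u$ has the correct sign (for the unit ball one gets $\mathscr{B}(\u_T,\u_T)=|\u_T|^2\geq 0$, consistent with positivity on convex domains).

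The one place where your argument is thinner than the cited proofs is the regularity bookkeeping in the approximation step, and you have correctly flagged it as the delicate point. Your corrected approximants $\tilde\u_n=\u_n-\chi(\u_n\cdot\tilde\nu)\tilde\nu$ are only Lipschitz (since $\nu\in W^{1,\infty}$ for a $C^{1,1}$ domain), not $C^\infty(\overline\O)$, so they do not fall under the ``smooth case'' as you stated it. You must either (i) justify the pointwise divergence identity and the divergence theorem for $W^{1,\infty}$ tangent fields directly --- which requires an extra mollification argument, because $(\divg\u)\u-(\u\cdot\nabla)\u$ is then only $L^\infty$ and its distributional divergence has to be shown to equal the first-order expression --- or (ii) produce genuinely smooth approximants by locally flattening the boundary, which is what Grisvard does. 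A second, smaller point: the second fundamental form of a $C^{1,1}$ boundary exists only almost everywhere and is merely $L^\infty$, so the boundary integral must be read in that sense; your convergence argument $(\tilde\u_n)_T\to\u_T$ in $L^2(\p\O)$ with $\mathscr{B}\in L^\infty(\p\O)$ does handle this correctly. Neither issue is a conceptual gap --- they are exactly the technicalities the cited references are invoked to dispose of --- so citing \cite{Gri1985} or \cite{ABDG1998} for the full details, as both you and the paper do, is the reasonable course.
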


\begin{proof}[Proof of Theorem \ref{Thm3}]
Note that a bounded and convex domain must be simply connected.
Since $\beta_1$ can be attained, there exists $\0\neq \u\in H_{n0}(\divg0,\curl,\O)$ such that
\begin{equation}\label{eq-b1}
\int_\O|\curl\u|^2\;dx=\beta_1\int_\O|\u|^2\;dx.
\end{equation}
For any constant vector $\mathbf{a}\in\mathbb{R}^3$, we have
$$\int_\O\u\cdot\mathbf{a}\;dx=\int_\O\u\cdot\nabla(\mathbf{a}\cdot x)\;dx=\int_{\p\O}(\u\cdot\nu)(\mathbf{a}\cdot x)\;dS-\int_\O\divg\u(\mathbf{a}\cdot x)\;dx=0.$$
Consequently, each component of $\u$ has mean zero. Hence it holds that
\begin{equation}\label{eq-b2}
\int_\O|\nabla\u|^2\;dx\geq\mu_2\int_\O|\u|^2\;dx.
\end{equation}
By Lemma \ref{lem-id}, we obtain
\begin{equation}\label{eq-b3}
\aligned
\int_\O|\nabla\u|^2\;dx=&\int_\O|\curl\u|^2\;dx-\int_{\p\O}\mathscr{B}(\u_T,\u_T)\;dS.
\endaligned
\end{equation}
Combining \eqref{eq-b1}, \eqref{eq-b2} and \eqref{eq-b3}, we get
$$\mu_2\int_\O|\u|^2\;dx\leq \beta_1\int_\O|\u|^2\;dx-\int_{\p\O}\mathscr{B}(\u_T,\u_T)\;dS.$$
Since $\O$ is strictly convex, $\mathscr{B}$ is positive definite. If $\beta_1\leq\mu_2$, then the above inequality implies
\begin{equation*}
\int_{\p\O}\mathscr{B}(\u_T,\u_T)\;dS=0.
\end{equation*}
 Thus by the above equality we get $\u_T=\0$ on $\p\O$. Hence $\u=\0$ on $\p\O$. Moreover, we have
 $$\u\in H^1_0(\divg0,\O)\cap\mathrm{ker}(\mathrm{M}_{n0}-\beta_1).$$
 By Lemma \ref{lem-sum}, we have $\u=\0$, which is a contradiction. Therefore, $\alpha_1=\beta_1>\mu_2$.
\end{proof}

\begin{proof}[Proof of Remark \ref{Rem}]
Since
$$\|\u\|_{L^2(\O)}\leq \sqrt{\frac{1}{\beta_1}}\|\curl\u\|_{L^2(\O)},\text{ for any $\u\in H_{n0}(\divg0,\curl,\O)$,}$$
by the same method in the proof of \cite[Theorem 5]{Pauly2015}, we obtain
$$\|\u\|_{L^2(\O)}^2\leq \frac{1}{\lambda_1}\|\divg\u\|_{L^2(\O)}^2+\frac{1}{\beta_1}\|\curl\u\|_{L^2(\O)}^2,\text{ for any $\u\in H_{t0}(\divg,\curl,\O)$.}$$
Hence it follows that
$$c_{m,t}=\max\left\{\sqrt{\frac{1}{\lambda_1}},\sqrt{\frac{1}{\beta_1}}\right\}.$$
Thanks to the inequality $\mu_2<\lambda_1$ and Theorem \ref{Thm3}, we have
$$c_{m,t}<\sqrt{\frac{1}{\mu_2}}=c_{m,n}.$$
\end{proof}

\subsection*{Acknowledgements.}
The author is grateful to his supervisor, Prof. Xingbin Pan, for guidance and constant encouragement. The referee is thanked for valuable
comments and suggestions that helped to improve the paper. The author was supported by the Doctoral Scientific Research Foundation of Anhui University of Technology (DT17100057).

 \vspace {0.1cm}

\begin {thebibliography}{DUMA}

\bibitem{ABDG1998} C. Amrouche, C. Bernardi, M. Dauge, V. Girault, {\it Vector potentials in three-dimensional non-smooth domains}, Math. Methods Appl. Sci. {\bf 21} (1998), no. 9, 823-864.

\bibitem{CD1999} M. Costabel, M. Dauge, {\it Maxwell and Lam\'{e} eigenvalues on polyhedra}, Math. Methods Appl. Sci. {\bf 22} (1999), no. 3, 243-258.

\bibitem{Filonov2003} N. Filonov, {\it Gaps in the spectrum of the Maxwell operator with periodic coefficients}, Comm. Math. Phys. {\bf 240} (2003), no. 1-2, 161-170.

\bibitem{Filonov2004} N. Filonov, {\it On an inequality for the eigenvalues of the Dirichlet and Neumann problems for the Laplace operator}, (Russian) Algebra i Analiz {\bf 16} (2004), no. 2, 172-176; translation in St. Petersburg Math. J. {\bf 16} (2005), no. 2, 413-416.

\bibitem{Fri2004} L. Friedlander, {\it Remarks on the membrane and buckling eigenvalues for planar domains}, Mosc. Math. J. {\bf 4} (2004), no. 2, 369-375.

\bibitem{Gri1985} P. Grisvard, {\it Elliptic problems in nonsmooth domains}, Monographs and Studies in Mathematics, {\bf 24}. Pitman (Advanced Publishing Program), Boston, MA, 1985.

\bibitem{HL1996} C. Hazard, M. Lenoir, {\it On the solution of time-harmonic scattering problems for Maxwell's equations}, SIAM J. Math. Anal. {\bf 27} (1996), no. 6, 1597-1630.

\bibitem{Ilyin2010} A. A. Ilyin, {\it Lower bounds for the spectrum of the Laplace and Stokes operators}, Discrete Contin. Dyn. Syst. {\bf 28} (2010), no. 1, 131-146.

\bibitem{Kelliher} J. P. Kelliher, {\it Eigenvalues of the Stokes operator versus the Dirichlet Laplacian in the plane}, Pacific J. Math. {\bf 244} (2010), no. 1, 99-132.

\bibitem{Pauly-p} D. Pauly, {\it On the Maxwell constants in 3D}, preprint, 2014, 33pp. \\
Available from https://arxiv.org/pdf/1406.1723v3.pdf

\bibitem{Pauly20151} D. Pauly, {\it On Maxwell's and Poincar\'{e}'s constants}, Discrete Contin. Dyn. Syst. Ser. S {\bf 8} (2015), no. 3, 607-618.

\bibitem{Pauly2015} D. Pauly, {\it On constants in Maxwell inequalities for bounded and convex domains}, J. Math. Sci. (N.Y.) {\bf 210} (2015), no. 6, 787-792.

\bibitem{Pauly} D. Pauly, {\it On the Maxwell constants in 3D}, Math. Methods Appl. Sci. {\bf 40} (2017), no. 2, 435-447.

\bibitem{Payne1955} L. E. Payne, {\it Inequalities for eigenvalues of membranes and plates}, J. Rational Mech. Anal. {\bf 4} (1955), 517-529.

\bibitem{YY2012} S. Y. Yolcu,  T. Yolcu, {\it Multidimensional lower bounds for the eigenvalues of Stokes and Dirichlet Laplacian operators}, J. Math. Phys. {\bf 53} (2012), no. 4, 043508, 17 pp.

\bibitem{ZZ2017} Y. Zeng, Z. B. Zhang, {\it Applications of a formula on Beltrami flow}, Math. Methods Appl. Sci. {\bf 41} (2018), no. 10, 3632-3642.

\end{thebibliography}

\end {document}